\numberwithin{equation}{section}
\theoremstyle{plain}
\newtheorem{thm}{Theorem}[section]
\newtheorem{lem}{Lemma}[section]
\newtheorem{rem}{Remark}[section]
\newtheorem{cor}{Corollary}[section]
\newtheorem{ex}{Example}[section]
\begin{document}

\begin{frontmatter}
\title{Is an arbitrary diffused Borel probability measure in a Polish space without isolated points Haar measure?
}
\runtitle{Is an arbitrary diffused Borel probability measure $\cdots$ Haar measure?
}

\begin{aug}
\author{\fnms{Gogi} \snm{Pantsulaia}\thanksref{t1}\ead[label=e1]{g.pantsulaia@gtu.ge}}
\affiliation{Georgian Technical University\\
 I.Vekua Institute of Applied Mathematics }

\thankstext{t1}{This paper was partially supported by Shota
Rustaveli National Science Foundation's Grant no FR/503/1-30/14.}
\runauthor{G.Pantsulaia}

\address{Department of Mathematics, Georgian Technical University,\\ Kostava Street. 77 , Tbilisi
DC  0175, Georgian Republic \\
\printead{e1}}
\end{aug}

\begin{abstract}It is introduced a certain approach for equipment of  an arbitrary  set of the cardinality of the continuum by structures of Polish groups  and two-sided (left or right) invariant Haar measures. By using this approach we answer positively  Maleki's certain question(2012) {\it what are the real $k$-dimensional manifolds with at least two different Lie group structures that have the same Haar measure.} It is demonstrated that for each diffused Borel probability measure $\mu$ defined in
a Polish space $(G,\rho,\mathcal{B}_{\rho}(G))$ without isolated points there
exist a metric $\rho_1$ and a group operation $\odot$  in $G$ such
that   $\mathcal{B}_{\rho}(G)=\mathcal{B}_{\rho_1}(G)$  and   $(G,\rho_1, \mathcal{B}_{\rho_1}(G), \odot)$ stands a compact
Polish group with a two-sided (left or right) invariant Haar measure $\mu$, where $\mathcal{B}_{\rho}(G)$ and $\mathcal{B}_{\rho_1}(G)$
denote Borel $\sigma$ algebras of subsets of $G$ generated by metrics $\rho$ and $\rho_1$, respectively.
Similar result is obtained for construction of  locally compact non-compact or  non-locally compact Polish groups equipped with two-sided (left or right) invariant quasi-finite Borel  measures.

\end{abstract}

\begin{keyword}[class=MSC]
\kwd[Primary ]{22D40}
\kwd{28D05 }
\kwd[; secondary ]{22E60}
\end{keyword}

\begin{keyword}
\kwd{Polish space}
\kwd{Polish group}
\kwd{ Lie group}
\kwd{ Haar measure}
\end{keyword}

\end{frontmatter}

\section{Inroduction}

\vskip3mm

  Let $(G,\rho, \odot)$ be a  Polish group, by which we mean a group
with a complete separable metric $\rho$ for which the transformation (from $G
\times G$ onto $G$ )  sending  $(x,y)$ into $x^{-1} \odot y$ is
continuous.

Let $B_{\rho}(G)$ be  the $\sigma$-algebra of Borel subsets of $G$ defined by the metric $\rho$.

\vskip3mm

The following problem was under intensive consideration by many mathematicians exactly one century ago.

{\bf Problem 1.1.}  Let  $(G,\rho, \odot)$ be a locally compact Polish group which is dense-in-itself \footnote{ A subset $A$ of a topological space is said to be dense-in-itself if $A$ contains no isolated points.}, that is, a space homeomorphic to a separable complete metric space and  $G$ has no isolated points. Does there exist a Borel measure $\mu$ in $(G,\rho, \odot)$  satisfying the following properties:

(i) The measure $\mu$ is diffused, that is,  $\mu$ vanishes on all singletons;

(ii) The measure $\mu$ is a two-sided (left or right) invariant, that is,  $\mu( g_1 \odot E \odot g_2 ) =\mu(E)$( $\mu(g_1 \odot E) =\mu(E)$ or $\mu( E \odot g_2 ) =\mu(E)$ )  for every $ g_1,g_2 \in G$  and every Borel
set $E \in B(G)$;

(iii) The measure $\mu$ is outer regular, that is,
$$
(\forall E)(E \in B(G) \rightarrow \mu(E) = \inf\{\mu(U) : E \subseteq U ~\& ~U \mbox{is~open}\});
$$

(iv) The measure $\mu$ is inner regular, that is,

$$
(\forall E)(E \in B(G) \rightarrow \mu(E) = \sup\{\mu(F) : F \subseteq E ~\&~ F ~\mbox{is~compact}\});
$$

(v) The measure $\mu$  is finite on every compact set, that is $\mu(K) < \infty$  for all compact $K$.

The special case of a left (or right ) invariant measure for second countable \footnote{A topological space $T$ is second countable if there exists some countable collection $\mathcal{U} = \{U_i\}_{i \in N}$ of open subsets of $T$ such that any open subset of $T$ can be written as a union of elements of some subfamily of $\mathcal{U}$} locally compact groups had been shown by Haar in 1933 \cite{Haar 1933}.
Notice that each Polish space is second countable which implies that the answer to Problem 1.1 is yes. The measure $\mu$ satisfying conditions (i)-(v) is called a left (right or two-sided) invariant Haar measure in a locally compact Polish group  $(G,\rho, \odot)$.

In this note  we would like to study the following problems, which can be considered as
 converse (in some sense) to Problem 1.1.

{\bf Problem 1.2.}  Let $(G,\rho)$ be a Polish metric space which is dense-in-itself.  Let $\mu$ be a diffused Borel probability measure defined in $(G,\rho)$. Do there exist a metric $\rho_1$ and a
group operation $\odot$ in $G$ such that the following three conditions

(j) The class of Borel measurable subsets of $G$ generated by the metric $\rho_1$ coincides with the class of Borel
measurable subsets of the same space generated by the metric $\rho$,

(jj) $(G,\rho_1, \odot)$ is  a compact Polish group

and

(jjj) $\mu$ is a left(right or two-sided) invariant Haar  measure in $(G,\rho_1, \odot)$

hold true ?

{\bf Problem 1.3.} Let $(G,\rho)$ be a Polish metric space which is dense-in-itself. Let $\mu$ be a diffused $\sigma$-finite non-finite Borel measure defined in $(G,\rho)$. Do there  exist a metric $\rho_{\varphi}$, a group operation $\odot_{\varphi}$ in $G$ and the Borel measure $\mu^\star$ in $G$ such that the following  four conditions

(i) The class of Borel measurable subsets of $G$ generated by the metric $\rho_{\varphi}$ coincides with the class of Borel
measurable subsets of the same space generated by the metric $\rho$,

(ii) $(G,\rho_{\varphi}, \odot_{\varphi})$  is  a non-compact locally compact Polish group,

(iii) The measures $\mu^\star$ and $\mu$ are equivalent

and

(iv) $\mu^\star$ is a left (right or two-sided) invariant  $\sigma$-finite non-finite  Haar  measure in $(G,\rho_{\varphi}, \odot_{\varphi})$

hold true?

{\bf Problem 1.4.}  Let $(G,\rho)$ be a Polish metric space which is dense-in-itself. 
Let $\mu$ be a diffused non-$\sigma$-finite  quasi-finite Borel measure defined in $(G,\rho)$. 
Do there exist a metric $\rho_1$ and a
group operation $\odot$ in $G$ such that the following three conditions

(j) The class of Borel measurable subsets of $G$ generated by the metric $\rho_1$ coincides with the class of Borel
measurable subsets of the same space generated by the metric $\rho$,

(jj) $(G,\rho_1, \odot)$ is a non-locally compact  Polish group

and

(jjj) $\mu$ is a left(right or two-sided) invariant quasi-finite Borel measure in $(G,\rho_1, \odot)$

hold true ?

In \cite{Maleki2012}, the author uses methods of the theory ultrafilters   to present a modified proof that a locally compact
group with a countable basis has a left invariant and right invariant Haar measure. The
author first shows that the topological space $(\beta_1 X; \tau_1)$  consisting of all ultrafilters on
a non-empty set $X$ is homeomorphic to the topological space $(\beta_2 X; \tau_2)$ of all nonzero
multiplicative functions in the first  dual space $\ell_{\infty}^{*}(X)$ (Theorem 3.8). By using this
result the author proves the existence of the infinitely additive left invariant measure $\lambda$
on compact sets of the locally compact Hausdorf topological group $G$ (Theorem 7.1).
Starting from this point, the author introduces the notion of $\nu$-measurable subsets
in $G$ where $\nu$ is an outer measure in $G$ induced by the $\lambda$ and open sets in $G$, and
proves the existence of a left invariant Haar measure by the scheme presented in  \cite{Neumann99}. Notice that his proof essentially uses the axiom of choice.
Several examples of the Haar measure are presented. It is underlined by Example 9.7
that $G = R^k$ with $k = \frac{n^2-n}{2}$
has two Lie group structures but the Lebesgue measure on
$R^k$ is the Haar measure on both Lie groups. In this context the following question was stated in this paper.

\medskip

{\bf Problem 1.5}(\cite{Maleki2012},Question 9.8) What are the real $k$-dimensional manifolds with at least two different
Lie group structures that have the same Haar measure?

\medskip

The rest of the paper is the following.

In Section 2 we  introduce a certain approach for equipment of an arbitrary  set of the cardinality of  the continuum
by structures of various(compact, locally compact or non-locally compact) Polish groups with  two-sided(left or right) invariant Borel measures and
study Problem  1.5.  

In Section 3 we study general question whether  an arbitrary diffused Borel probability 
measure in a Polish space without isolated points is Haar measure and give its affirmative resolution.
Moreover, we study Problems 1.2, 1.3  and answer to them positively. 

\section{Equipment of an arbitrary  set of the cardinality of the continuum by structures of Polish groups }

\begin{thm}Let $X$ be a set of the cardinality of the continuum and $(G, \odot, \rho)$  a Polish group.
Further, let $f : G \to X$ be a one-to-one mapping. We set
$$
x \odot_f y = f(f^{-1}(x)\odot f^{-1}(y))
$$
and
$$
\rho_f(x,y)=\rho(f^{-1}(x),f^{-1}(y))
$$
for $x,y \in X$. Then  the following  conditions hold true:

(i)~ $(G_f,\odot_f, \rho_f)$  is a Polish group which is Borel isomorphic to the Polish group $(G, \odot, \rho)$;

(ii)~ If $(G, \odot, \rho)$   is an abelian  Polish group then so is $(G_f,\odot_f, \rho_f)$;

(iii)~ If  $\rho$ is two-sided invariant metric in $(G, \odot)$ so is $\rho_f$  in $(G_f,\odot_f)$;

(iv)~ If $(G, \odot, \rho)$ is dense-in-itself  so is $(X,\odot_f, \rho_f)$;

(v)~ If  $(G, \odot, \rho)$  is a compact Polish group  then  so is  $(X,\odot_f, \rho_f)$;

(vi)~ If  $(G, \odot, \rho)$  is a locally compact Polish group  then  so  is  $(X,\odot_f, \rho_f)$;

(vii)~ If  $(G, \odot, \rho)$  a non-locally compact Polish group  then  so is $(X,\odot_f, \rho_f)$;

(viii) ~ If  $(G, \odot, \rho)$  is a locally compact or compact Polish group and $\lambda$ is a left(or right or two-sided ) invariant Haar  measure in $(G, \odot, \rho)$, then $\lambda_f$ also is a left(or right or two-sided ) invariant Haar measure in  $(G_f,\odot_f, \rho_f)$, where $G_f=X$, $\mathcal{B}_{\rho_f}(G_f)$ is Borel $\sigma$-algebra
of $G_f$ generated by the metric $\rho_f$  and $\lambda_{f}$ is a Borel measure in $G_f$ defined by
$$
(\forall Y)(Y \in \mathcal{B}_f(G_f) \rightarrow
\lambda_{f}(Y)=\lambda(f^{-1}(Y))).
$$

(ix) ~ If  $(G, \odot, \rho)$  is a non-locally compact Polish group and $\lambda$ is a left(or right or two-sided)  invariant quasi-finite \footnote{A measure $\mu$ is called quasi-finite if there is a $\mu$-measurable set $X$ with $0<\mu(X)<+\infty$.} Borel measure in $(G, \odot, \rho)$,
then $\lambda_f$ also is a left(or right or two-sided) invariant quasi-finite Borel  measure in  $(G_f,\odot_f, \rho_f)$, where $G_f=X$, $\mathcal{B}_{\rho_f}(G_f)$ is Borel $\sigma$-algebra
of $G_f$ generated by the metric $\rho_f$  and $\lambda_{f}$ is a Borel measure in $G_f$ defined by
$$
(\forall Y)(Y \in \mathcal{B}_f(G_f) \rightarrow
\lambda_{f}(Y)=\lambda(f^{-1}(Y))).
$$
\end{thm}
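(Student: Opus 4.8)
The plan is to show everything transfers through the bijection $f$ because $f$ is, by construction, simultaneously a group isomorphism (from $(G,\odot)$ onto $(X,\odot_f)$) and an isometry (from $(G,\rho)$ onto $(X,\rho_f)$), and hence a homeomorphism and a Borel isomorphism. So the overarching strategy is: first establish these two structural facts once and for all, then read off each of the nine conclusions as a routine consequence.

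\textbf{Step 1: $f$ is a group isomorphism.} First I would verify that $\odot_f$ is a well-defined group operation on $X$ and that $f(a\odot b)=f(a)\odot_f f(b)$ for all $a,b\in G$. This is immediate from the defining formula $x\odot_f y=f(f^{-1}(x)\odot f^{-1}(y))$: setting $x=f(a)$, $y=f(b)$ gives $f(a)\odot_f f(b)=f(a\odot b)$. Associativity, the identity element $f(e)$, and inverses $f(a^{-1})$ all transport directly because $f$ is a bijection. This yields (ii) (commutativity of $\odot_f$ follows from commutativity of $\odot$) and the algebraic half of (i).

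\textbf{Step 2: $f$ is an isometry, hence a homeomorphism and Borel isomorphism.} From $\rho_f(x,y)=\rho(f^{-1}(x),f^{-1}(y))$ one checks the metric axioms for $\rho_f$ and that $f$ is a surjective isometry. Consequently $f$ and $f^{-1}$ are continuous, completeness and separability of $\rho$ pass to $\rho_f$, and $f$ induces a bijection $\mathcal{B}_\rho(G)\leftrightarrow\mathcal{B}_{\rho_f}(X)$ carrying open sets to open sets. Combined with Step 1, the map $(x,y)\mapsto x^{-1}\odot_f y$ is continuous (being the $f$-conjugate of its continuous counterpart in $G$), so $(X,\odot_f,\rho_f)$ is a Polish group Borel isomorphic to $G$, completing (i). Items (iv)--(vii) then follow because density-in-itself, compactness, local compactness, and their negations are topological invariants preserved by the homeomorphism $f$; and (iii) follows since $\rho_f$ inherits two-sided invariance from $\rho$ by substituting the isomorphism relations of Step 1 into the invariance identity.

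\textbf{Step 3: invariance and quasi-finiteness of the pushforward measure.} For (viii) and (ix) the key point is that $\lambda_f$ defined by $\lambda_f(Y)=\lambda(f^{-1}(Y))$ is exactly the pushforward of $\lambda$ under $f$, which is a Borel measure because $f^{-1}$ is Borel measurable. Left invariance transfers by the computation
\begin{equation*}
\lambda_f\bigl(f(a)\odot_f Y\bigr)=\lambda\bigl(f^{-1}(f(a)\odot_f Y)\bigr)=\lambda\bigl(a\odot f^{-1}(Y)\bigr)=\lambda\bigl(f^{-1}(Y)\bigr)=\lambda_f(Y),
\end{equation*}
using $f^{-1}(f(a)\odot_f Y)=a\odot f^{-1}(Y)$ from Step 1; the right and two-sided cases are identical, and the Haar regularity conditions (outer/inner regularity, finiteness on compacta) transfer because $f$ is a homeomorphism mapping open sets to open sets and compact sets to compact sets. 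Quasi-finiteness in (ix) is immediate since $\lambda_f(f(X_0))=\lambda(X_0)\in(0,\infty)$ for the witnessing set $X_0$. I expect no serious obstacle here; the only point demanding care is the bookkeeping that $f$ preserves the \emph{class} of compact and open sets (so regularity is genuinely preserved rather than merely the measure values), which is precisely where being a homeomorphism, not just a Borel isomorphism, is used.
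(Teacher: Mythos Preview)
Your proposal is correct and follows essentially the same approach as the paper: both transport every structure through the bijection $f$, verifying that the group axioms, metric/topological properties, and measure-theoretic properties all transfer. Your presentation is more streamlined---you isolate once that $f$ is simultaneously a group isomorphism and an isometry (hence a homeomorphism) and then invoke general preservation principles, whereas the paper carries out each item by explicit computation (e.g.\ associativity expanded out, open covers pulled back for compactness, regularity checked via $f^{-1}$ of open and compact sets)---but the underlying argument is identical.
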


\begin{proof}{\bf Proof of the item (i).}

{\bf Closure }. If $x,y \in X$ then $x \odot_f y=f(f^{-1}(x)\odot
f^{-1}(y)) \in X$.

{\bf Associativity }. For all  $x, y$ and $z$ in $X$, we have

$$(x \odot_f y)\odot_f z=f[f^{-1}(x \odot_f y)\odot
f^{-1}(z)]=f[f^{-1}(f(f^{-1}(x)\odot f^{-1}(y)))\odot f^{-1}(z)]=
$$
$$
f[(f^{-1}(x)\odot f^{-1}(y))\odot f^{-1}(z)]=f[f^{-1}(x)\odot
(f^{-1}(y)\odot f^{-1}(z))]= $$
$$f[f^{-1}(x)\odot f^{-1}(y
\odot_f z)]=x \odot_f (y \odot_f z).
$$

{\bf Identity element.}~Let $e$ be an identity element of $G$.
Setting $e_f:=f(e)\in X$, for $x \in X$ we have
$$
x \odot_f e_f=x \odot_f f(e)= f(f^{-1}(x)\odot f^{-1}(f(e)))=
f(f^{-1}(x)\odot e)=f(f^{-1}(x))=x
$$ and
$$
 e_f \odot_f x =f(e) \odot_f x= f(f^{-1}(f(e))\odot f^{-1}(x))=
f(e \odot f^{-1}(x))=f(f^{-1}(x))=x.
$$
The latter relations means that $e_f$ is the identity element of
$X$.

{\bf Inverse element.} If $a \in G$ then we denote its inverse
element by  $a^{-1}_G$. For $x \in X$ setting
$x^{-1}_X=f((f^{-1}(x))^{-1}_G)$, we have
$$
x \odot_f x^{-1}_X=f(f^{-1}(x)\odot
f^{-1}(x^{-1}_X))=f(f^{-1}(x)\odot f^{-1}(f((f^{-1}(x))^{-1}_G)))=
$$
$$
f(f^{-1}(x)\odot (f^{-1}(x))^{-1}_G)=f(e)=e_f
$$
and
$$
x^{-1}_X \odot_f x=f( f^{-1}(x^{-1}_X)\odot
f^{-1}(x))=f(f^{-1}(f((f^{-1}(x))^{-1}_G))\odot f^{-1}(x) )=
$$
$$
f((f^{-1}(x))^{-1}_G\odot f^{-1}(x))=f(e)=e_f.
$$
The latter relations means that $x^{-1}_X$ is an inverse element
of $x$.

{\bf Continuity of the operation $(x,y) \to x \odot_f y^{-1}_X$ when
$(a,b) \to a \odot b^{-1}_G$ is continuous.}

For all neighbourhood $U_X(x \odot_f y^{-1}_X,r)$ we have to
choose such neighbourhoods $U_X(x,r_1)$ and $U_X(y,r_2)$ of
elements $x$ and $y$ respectively that $(w_1 \odot_f (w_2)^{-1}_X)\in U_X(x
\odot_f y^{-1}_X,r)$ for  $w_1 \in U_X(x,r_1)$
and $w_2 \in U_X(y,r_2)$.

We have
$$
U_X(x \odot_f y^{-1}_X,r)=\{z : \rho_f(z, x \odot_f y^{-1}_X)<r\}=
\{z : \rho(f^{-1}(z), f^{-1}(x \odot_f y^{-1}_X))<r\}=$$
$$ \{z :
\rho(f^{-1}(z), f^{-1}(f(f^{-1}(x) \odot f^{-1}(y^{-1}_X))))<r\}=
$$
$$\{z : \rho(f^{-1}(z), f^{-1}(f(f^{-1}(x) \odot
f^{-1}(f((f^{-1}(y))^{-1}_G))))<r\}= $$
$$ \{z : \rho(f^{-1}(z),
(f^{-1}(x) \odot (f^{-1}(y))^{-1}_G))))<r\}.
$$
Since $(a,b) \to a \odot b^{-1}_G$ is continuous, for
$a=f^{-1}(x)$, $b=f^{-1}(y)$ and  $r>0$ we can choose such
neighbourhoods $U_G(f^{-1}(x),r_1)$ and $U_G(f^{-1}(y),r_2)$ of
elements $f^{-1}(x)$ and $f^{-1}(y)$ respectively that then
$(a_1 \odot (a_2)^{-1}_G)\in U_G(f^{-1}(x) \odot
(f^{-1}(y))^{-1}_G,r)$ for $a_1
\in U_G(f^{-1}(x),r_1)$ and $a_2 \in U_G(f^{-1}(y),r_2).$

It is obvious to check the validity of the following equalities
$$
U_X(x,r_1)=f(U_G(f^{-1}(x),r_1)),$$
$$U_X(y,r_2)=f(U_G(f^{-1}(y),r_2)),$$
$$
U_X(x \odot_f y^{-1}_X,r)=f(U_G(f^{-1}(x) \odot
(f^{-1}(y))^{-1}_G,r).$$

Notice that if $w_1 \in U_X(x,r_1)$ and $w_2 \in U_X(y,r_2)$ then
$(w_1 \odot_f (w_2)^{-1}_X)\in U_X(x \odot_f y^{-1}_X,r)$.
Indeed,$w_1 \in U_X(x,r_1)$ and $w_2 \in U_X(y,r_2)$ imply that
$f^{-1}(w_1) \in U_G(f^{-1}(x),r_1)$ and $f^{-1}(w_2) \in
U_G(f^{-1}(y),r_2)$ from which we deduce that $(w_1 \odot
(w_2)^{-1}_G)\in U_G(f^{-1}(x) \odot (f^{-1}(y))^{-1}_G,r)$.

{\bf Borel isomorphism of $(G,\odot, \rho)$ and $(G_f,\odot_f, \rho_f)$}. Notice that this isomorphism is realized by the mapping $f:G \to G_f$.

{\bf Proof of the item (ii).}

Since $(G, \odot)$ is an abelian Polish group, for $x,y \in G_f$ we have
$$
x \odot_f y=f(f^{-1}(x)\odot f^{-1}(y))=f(f^{-1}(y)\odot
f^{-1}(x))=y \odot_f x.
$$

{\bf Proof of the item (iii).}

Since $\rho$ is a two-sided invariant metric in $(G, \odot)$ we have $\rho(h_1 \odot x \odot h_2,h_1 \odot y \odot h_2)=\rho( x,y)$ for each
$x,y,h_1,h_2 \in G$. Take into account this fact and the associativity property of the group operation $\odot_f$,  we get that the condition

$$\rho_f(h^*_1 \odot_f x^* \odot_f h^*_2,h^*_1 \odot_f y^* \odot_f h^*_2)=
$$
$$
\rho_f( f (f^{-1}(h^*_1) \odot f^{-1}(x^*) \odot f^{-1}(h^*_2)), f (f^{-1}(h^*_1) \odot f^{-1}(y^*) \odot f^{-1}(h^*_2)))=
$$
$$
\rho( f^{-1}(h^*_1) \odot f^{-1}(x^*) \odot f^{-1}(h^*_2), f^{-1}(h^*_1) \odot f^{-1}(y^*) \odot f^{-1}(h^*_2))=
$$
$$
\rho(f^{-1}(x^*),  f^{-1}(y^*))=\rho_f (f(f^{-1}(x^*)),  f(f^{-1}(y^*)))=\rho_f (x^*,  y^*)
$$
holds true for each $x^*, y^*, h^*_1, h^*_2 \in G_f$.

{\bf Proof of the item (iv).}  We have to show that if $(G, \odot, \rho)$ is dense-in-itself then so is $(G_f, \odot_f, \rho_f)$.
Indeed assume the contrary and let $x^*$ be an isolated point of $G_f$. The latter relation means that for some $\epsilon>0$  we have
 $\rho_f(y^*,x^*)\ge \epsilon$ for each $y^* \in G_f \setminus \{ x^*\}$ which implies that  $\rho(y,x)\ge \epsilon$ for each
$y \in G \setminus \{ x\}$  where $x=f^{-1}(x^*)$. We get the contradiction and the validity of the item (iv) is proved.

 {\bf Proof of the item (v).} We have to prove that if a family of open sets  $(U^*_i)_{i \in I}$ whose union  covers the space $G_f$ then there is its subfamily whose union also covers the same space.  Let consider a family of sets
  $(f^{-1}(U^*_i))_{i \in I}$. Since it  is  the family of open sets whose union covers the space $G$ and $G$ is a compact space, we claim that there is a finite subfamily
 $(f^{-1}(U^*_{i_k}))_{1 \le k \le n}(i_k \in I$ for $k=1,\cdots, n)$) whose union  $\cup_{k=1}^n f^{-1}(U^*_{i_k})$ covers $G$.
  Now it is obvious that the family $(U^*_{i_k})_{1 \le k \le n}$  is the family of open sets (in $G_f)$)  whose union  also covers $G_f$.

 {\bf Proof of the item (vi).} Let $x^* \in G_f$. Since $(G, \odot, \rho)$ is locally compact the point $f^{-1}(x^*)$ has a compact neighbourhood $U$.
  Now it is obvious that the  set $f(U)$ will be a compact neighbourhood of the point $x^*$. Since $x^* \in G_f$ was taken arbitrary the validity of the item (vi) is proved.

{\bf Proof of the item (vii).} Since $(G, \odot, \rho)$ is no locally compact there is a point $x_0$ which has no a compact neighbourhood.
Now if we consider a point $f(x_0)$, we observe that it has no a compact neighbourhood. Indeed, if assume the contrary and  $U$ is a compact neighbourhood of the point $f(x_0)$  then  $f^{-1}(U)$ also will be a compact neighbourhood of the point  $x_0$ and we get the contradiction. This ends the proof of the item (vii).

{\bf Proof of the item (viii).}

{\bf Proof of the diffusivity of the measure $\lambda_f$.} Since $\lambda$ vanishes on all singletons, we have
$$\lambda_{f}(x)=\lambda(f^{-1}(x)))=0$$
for each $x \in G_f$;

{\bf Proof of the left(or right or two-sided ) invariance of the measure $\lambda_f$.} If  $(G, \odot, \rho)$  is a locally compact or compact Polish group and $\lambda$ is a left(or right or two-sided ) invariant Haar  measure in $(G, \odot, \rho)$,
then $\lambda_f$ also will be a left(or right or two-sided ) invariant Haar measure in  $(G_f,\odot_f, \rho_f, \mathcal{B}_{\rho_f}(G_f))$, where $G_f=X$, $\mathcal{B}_{\rho_f}(G_f)$ is Borel $\sigma$-algebra
of $G_f$ generated by the metric $\rho_f$  and $\lambda_{f}$ is defined by
$$
(\forall Y)(Y \in \mathcal{B}_{\rho_f}(G_f) \rightarrow
\lambda_{f}(Y)=\lambda(f^{-1}(Y))).
$$

{\bf  Case 1. } $\lambda$ is a left invariant Haar  measure in $(G, \odot, \rho)$.

$$
(\forall Y)(\forall h) ((Y \in \mathcal{B}_{\rho_f}(G_f) ~\&~ h \in G_f) \rightarrow
\lambda_{f}(h \odot_f Y )=$$
$$\lambda(f^{-1}( h \odot_f Y ))=\lambda(f^{-1}(h)  \odot  f^{-1}(Y))=\lambda(f^{-1}(Y))=\lambda_{f}(Y ))).
$$

{\bf  Case 2. } $\lambda$ is a right invariant Haar  measure in $(G, \odot, \rho)$.
$$
(\forall Y)(\forall h) ((Y \in \mathcal{B}_{\rho_f}(G_f) ~\&~ h \in G_f) \rightarrow
\lambda_{f}( Y  \odot_f h )=$$
$$\lambda(f^{-1}( Y \odot_f h ))=\lambda( f^{-1}(Y) \odot f^{-1}(h))=\lambda(f^{-1}(Y))=\lambda_{f}(Y ))).
$$

{\bf  Case 3. } $\lambda$ is a two-sided invariant Haar  measure in $(G, \odot, \rho)$.

$$
(\forall Y)(\forall h_1)(\forall h_2) ((Y \in \mathcal{B}_{\rho_f}(G_f) ~\&~ h_1 \in G_f) ~\&~ h_2 \in G_f) \rightarrow
\lambda_{f}(h_1 \odot_f  Y  \odot_f  h_2 )=$$
$$\lambda(f^{-1}(h_1 \odot_f  Y  \odot_f  h_2 ))=\lambda( f^{-1}(h_1) \odot f^{-1}(Y) \odot f^{-1}(h_2))=
\lambda(f^{-1}(Y))=\lambda_{f}(Y ))).
$$

{\bf Proof of the outer regularity of the measure  $\lambda_f$. } Let take any set $E_f \in B_{\rho_f}(G_f)$ and any $\epsilon>0$.  Let consider a set $f^{-1}(E_f) \in B(G)$. Since $\lambda$ is outer regular there is an open subset $U$ of $G$ such that
$f^{-1}(E_f) \subseteq U$ and $\lambda (U \setminus f^{-1}(E_f))<\epsilon.$ Then we get  $$\lambda_f (f(U) \setminus E_f)= \lambda(f^{-1}(f(U) \setminus E_f))= \lambda(U \setminus f^{-1}(E_f))<\epsilon.$$

{\bf Proof of the inner  regularity of the measure  $\lambda_f$. }  Let take any set $E_f \in B_{\rho_f}(G_f)$ and any $\epsilon>0$.  Let consider a set $f^{-1}(E_f) \in B(G)$. Since $\lambda$ is inner regular there is a compact subset $F$  of $G$ such that
$F \subseteq f^{-1}(E_f)$ and $\lambda (f^{-1}(E_f)\setminus F)<\epsilon$. Then we get  $$\lambda_f (E_f \setminus f(F))= \lambda(f^{-1}(E_f \setminus f(F))= \lambda( f^{-1}(E_f)\setminus F)<\epsilon.$$

{\bf Proof of the finiteness of the measure  $\lambda_f$ on  all compact subsets.} Let take any compact set $F \subseteq  G_f$. Since $f^{-1}(F)$ is compact in $G$ and the measure $\lambda$  is finite on every compact set we get $\lambda_f(F)=\lambda(f^{-1}(F)) < \infty$.

{\bf Proof of the item (ix).}  The proof of this item can be obtained by the scheme used in the proof of the item (viii).

\end{proof}

Below we consider some examples which employ the constructions described by  Theorem 2.1.

\begin{ex} Let
$f: {R} \to (-c,c)$ be  defined by
$f(y)=\frac{c(e^y-1)}{1+e^y}$ for $y \in ~{R}$, where $c>0$.
Then $f^{-1}:(-c,c) \to ~{R}$ is defined by $f^{-1}(x)=\ln(\frac{c+x}{c-x})$ for $x \in (-c,c)$.  For $x,y \in
(-c,c)$ we put
$$
x +_f y =
f(f^{-1}(x)+f^{-1}(y))=f(\ln(\frac{c+x}{c-x})+\ln(\frac{c+y}{c-y}))=
f(\ln(\frac{(c+x)(c+y)}{(c-x)(c-y)}))=
$$
$$
\frac{c(e^{\ln(\frac{(c+x)(c+y)}{(c-x)(c-y)})}-1)}{1+e^{\ln(\frac{(c+x)(c+y)}{(c-x)(c-y)})}}=
\frac{c(\frac{(c+x)(c+y)}{(c-x)(c-y)}-1)}{1+\frac{(c+x)(c+y)}{(c-x)(c-y)}}=
$$

$$
c\frac{(c+x)(c+y)-(c-x)(c-y)}{(c-x)(c-y)+(c+x)(c+y)}=c\frac{2cx+2cy}{2c^2+2xy}=
\frac{x+y}{1+\frac{xy}{c^2}}.
$$
Note that $\lambda_{f}$ defined by
$$
(\forall Y)(Y \in \mathcal{B}_{\rho_f}((-c,c)) \rightarrow
\lambda_{f}(Y)=\lambda(\{ \ln(\frac{c+y}{c-y}): y  \in
Y\})=\int_Y\frac{c^2}{c^2-t^2}dt)
$$
will be Haar measure in $(-c,c)$, where $\lambda$ denotes a linear
Lebesgue measure in $~{R}$.
\end{ex}

\begin{rem} Example 2.2 demonstrates that the Haar measure space $(G_c, \star, \rho_{G_c},  \nu)$
which comes from \cite{Maleki2012}(cf. Example 9.1, p.61) exactly
coincides with a Polish group $(~{R}_f, +_f, \rho_f,
\lambda_f )$ where $\rho$ is a usual metric in $~{R}$,
$\lambda$ is a linear Lebesgue measure in $~{R}$ and $f:
~{R} \to (-c,c)$ is a mapping defined by
$f(y)=\frac{c(e^y-1)}{1+e^y}$ for $y \in ~{R}$.

It is well known(see, \cite{Yakovenko04}, Eq. 35, p. 5)  that the
relativistic law of adding velocities has the following form
$$v =\frac{v_1+v_2}{1+\frac{v_1 v_2}{c^2}}$$
for $v_1,v_2 \in (-c,c)$, where $c$ denotes the speed of light.
This operation of adding exactly coincides with the operation
$+_f$ under which $(-c,c)$ stands a locally compact non-compact Polish group. Hence
the Haar measure $\lambda_f$ can be used in studding properties of
the inertial reference frame $O_0$ which moves relative to $O$
with velocity $v$ in along the $x$ axis (see, \cite{Yakovenko04},
p. 1).

\end{rem}

\begin{ex}Let $(~{R}, +, \rho)$ be a one-dimensional
Euclidian vector space and $\lambda$  a linear Lebesgue measure in
$~{R}$. Let $f: ~{R} \to (0,+\infty)$ be defined by
$f(x)=e^x$. We put
$$
x +_f y = exp\{\ln(x) + \ln(y)\}=exp\{\ln(xy)\}=xy
$$
and
$$
\rho_f(x,y)=|\ln(x)-\ln(y)|
$$
for $x,y \in (0,+\infty)$. We define $\lambda_{f}$  by
$$
(\forall Y)(Y \in \mathcal{B}((0,+\infty)) \rightarrow
\lambda_{f}(Y)=\lambda(\{ \ln(y) : y \in Y \} )).
$$
By Theorem 2.1  we know that $\lambda_{f}$ is  Haar measure in
$(0,+\infty)$.
 Since
$$
(\forall Z)(Z \in \mathcal{B}(~{R}) \rightarrow
\lambda(Z)=\int_Z d x)
$$
we deduce that
$$
(\forall Y)(Y \in \mathcal{B}((0,+\infty)) \rightarrow
\lambda_{f}(Y)=\lambda(\ln(Y))=\int_{\ln(Y)}dx=\int_Y\frac{dx}{x}).
$$

Note that  Haar measure space $(G, \cdot, \rho_G, \nu)$ constructed
in \cite{Maleki2012}(see p.54) coincides with Haar measure space
$(~{R}_f, +_f, \rho_f, \lambda_f )$.
\end{ex}

\begin{ex}Let $X=(-c,c)$ where $c>0$. We define $f : ~{R}
\to (-c,c)$ by $ f(x)=\frac{2 c arctg(x)}{\pi} $ for $x \in
~{R}$. Then $f^{-1}(w)=tg(\frac{\pi w}{2c})$ for $w \in
(-c,c)$. We have

$$
x \odot_f y = f(f^{-1}(x)\odot f^{-1}(y))=f(tg(\frac{\pi
x}{2c})+tg(\frac{\pi y}{2c}))=\frac{2 c arctg(tg(\frac{\pi
x}{2c})+tg(\frac{\pi y}{2c}))}{\pi}=
$$
and
$$
\rho_f(x,y)=\rho(f^{-1}(x),f^{-1}(y))=|tg(\frac{\pi
x}{2c})-tg(\frac{\pi y}{2c})|.
$$
for $x,y \in (-c,c)$.

Then we get a new example of Haar measure space $(~{R}_f,
+_f, \rho_f, \lambda_f )$.
Note that the Haar measure $\lambda_{f}$ in $(-c,c)$ is defined
by
$$
(\forall Y)(Y \in \mathcal{B}_f((-c,c)) \rightarrow
\lambda_{f}(Y)=\lambda(f^{-1}(Y)))=\lambda(\{tg(\frac{\pi w}{2c}):
w \in Y\})).
$$
\end{ex}

\begin{ex} Let
$f: ~{R} \to ~{Z} \times
\{0,1,\cdots,9\}^{~{N}}$ be defined by $f(a_0 + 0,a_1a_2
\cdots)=(a_0,a_1,a_2,\cdots)$ for $a_0 \in ~{Z}$ and
$(a_0,a_1,a_2,\cdots)\in \{0,1,\cdots,9\}^{~{N}}$.

 Then
$f^{-1}:~{Z} \times \{0,1,\cdots,9\}^{~{N}}\to
~{R}$ is defined by $f^{-1}((a_0,a_1,a_2,\cdots))=a_0 +
0,a_1a_2 \cdots$. We put
$$
(a_0,a_1,a_2,\cdots) +_f (b_0,b_1,b_2,\cdots) =
f(f^{-1}((a_0,a_1,a_2,\cdots))+f^{-1}((b_0,b_1,b_2,\cdots)))= $$
$$
f(a_0,a_1a_2 \cdots +b_0,b_1b_2 \cdots)=f(c_0,c_1c_2
\cdots)=(c_0,c_1,c_2 \cdots),
$$
where $c_0,c_1c_2
\cdots=a_0,a_1a_2 \cdots +b_0,b_1b_2 \cdots$.
The metric $ \rho_f$ in $~{Z} \times
\{0,1,\cdots,9\}^{~{N}}$ is defined by

$$
\rho_f((a_0,a_1,a_2,\cdots),(b_0,b_1,b_2,\cdots))=\rho(f^{-1}((a_0,a_1,a_2,\cdots)),f^{-1}((b_0,b_1,b_2,\cdots)))=
$$
$$
\rho(a_0 + 0,a_1a_2 \cdots,b_0 + 0,b_1b_2 \cdots)=|(a_0 +
0,a_1a_2 \cdots)-(b_0 + 0,b_1b_2 \cdots)|.
$$
By Theorem 2.1  we know that $\lambda_{f}$ defined by
$$
(\forall Y)(Y \in \mathcal{B}_{\rho_f}(~{Z} \times
\{0,1,\cdots,9\}^{~{N}}) \rightarrow
\lambda_{f}(Y)=\lambda(f^{-1}Y)= $$
$$\lambda (\{ a_0,a_1a_2\cdots : (a_0,a_1,a_2,\cdots) \in Y )
$$
is Haar measure in $~{Z} \times
\{0,1,\cdots,9\}^{~{N}}$, where $\lambda$ denotes a linear
Lebesgue measure in $~{R}$.
\end{ex}

\begin{rem} Let $M$ be a topological space.
A homeomorphism $\phi : U \to V$ of an open set $U \subseteq M$
onto an open set $V \subseteq ~{R}^d$ will be called a local
coordinate chart (or just `a chart') and $U$ is then a coordinate
neighbourhood (or `a coordinate patch') in $M$.

A  $C^{\infty}$ differentiable structure, or smooth structure, on
$M$ is a collection of coordinate charts $\phi_{\alpha}:
U_{\alpha} \to V_{\alpha} \subseteq ~{R}^d$ (same $d$ for
all $\alpha$'s) such that

(i) $M=\cup_{\alpha \in A}U_{\alpha};$

(ii) any two charts are `compatible': for every $\alpha,\beta$ the
change of local coordinates $\phi_{\beta}\circ \phi_{\alpha}^{-1}$
is a smooth $C^{\infty}$ map on its domain of definition, i.e. on
$\phi_{\alpha}(U_{\beta} \cap U_{\alpha}) \subseteq ~{R}^d$;

(iii)  the collection of charts $\phi_{\alpha}$ is maximal with
respect to the property (ii): if a chart $\phi$ of $M$ is
compatible with all $\phi_{\alpha}$  then $\phi$ is included in
the collection.

A topological space equipped with a $C^{\infty}$ differential
structure is called a real smooth manifold. Then $d$ is called the
dimension of $M$, $d = dim M$.

Recall, that a Lie group is a set $G$ with two structures: $G$ is
a group and $G$ is a real smooth manifold. These structures agree
in the following sense: multiplication and inversion are smooth
maps.

  In \cite{Maleki2012}(see, Example 9.7, p. 64), it is shown that
$G= ~{R}^k$ with $k=\frac{n^2-n}{2}$ has two different Lie
group structure and the Lebesgue measure in $~{R}^k$ is Haar
measure on both Lie groups. Further the author asks(see
,\cite{Maleki2012}, Question 9.8) what are real $k$ dimensional
manifolds with at least two different Lie group structures that
have the same Haar measure.
\end{rem}

The next example answers positively to Maleki's question described in Remark 2.2.

\begin{ex}For $n>2$, let $(~{R}^n, \rho_n, +, \lambda_n)$ be an $n$-dimensional Euclidean vector space  equipped with standard metric $\rho_n$ and $n$-dimensional Lebesgue measure $\lambda_n$.
Let $f : ~{R}^n \to ~{R}^n$ be defined by $f(x_1,x_2,
x_3,\cdots, x_n) = (x_1,x_1^2+x_2,x_3,\cdots, x_n)$ for $(x_1,x_2,
x_3,\cdots, x_n)\in ~{R}^n$.

It is obvious that

1) $f$ is bijection of $R^n$  and  $f^{-1}((x_1,x_2,
x_3,\cdots, x_n))= (x_1,x_2-x_1^2,x_3,\cdots, x_n)$ for $(x_1,x_2,
x_3,\cdots, x_n)\in ~{R}^n$;

2) $f$ as well $f^{-1}$ is infinitely many times continuously differentiable;

3) $f$ is not linear;

4) $f$ as well $f^{-1}$  preserves Lebesgue measure $\lambda_n$.

Let consider $((~{R}^n)_f, (\rho_n)_f, +_f, (\lambda_n)_f)$.
By virtue of Theorem 2.1 we deduce that $((~{R}^n)_f,
(\rho_n)_f, +_f, (\lambda_n)_f)$ is a locally compact non-compact Polish group with two-sided invariant Haar measure $(\lambda_n)_f$.

Note that  $(~{R}^n)_f=~{R}^n$;

b) ~$(\rho_n)_f(x,y)=\rho_n(f^{-1}(x),f^{-1}(y))$;

c) ~$x +_f y=f(f^{-1}(x)+f^{-1}(y))$;

Note that the operation $''+_f''$ is commutative but it  differs
from the usual addition operation$''+''$. Indeed, we have
$$
(1,1, \cdots, 1)+_f(2,2, \cdots, 2)= f(f^{-1}(1,1, \cdots,
1)+f^{-1}(2,2, \cdots, 2))= $$
$$f((1,0,1, \cdots, 1)+(2,-2,2,
\cdots, 2))=f(3,-2,3,\cdots,3)=(3,7,3,\cdots,3)$$

and

$$
(1, \cdots, 1)+(2, \cdots, 2)=(3,\cdots,3).
$$

Since $f$ is Borel measurable, by using Theorem 2.1  we deduce
that $\mathcal{B}_{\rho_f}(~{R}^n)=\mathcal{B}(~{R}^n)$.

Note also that $(\lambda_n)_f=\lambda_n$. Indeed, by
Theorem 2.1 we have  that
$$
(\forall Y)(Y \in \mathcal{B}(~{R}^n) \rightarrow
\lambda_{f}(Y)=\lambda(f^{-1}(Y))= \lambda(Y)).
$$
\end{ex}

\begin{rem}Notice that Example 2.5 extends the result of Example 9.7
\cite{Maleki2012}. Indeed, it is obvious that for $n>2,$ measure
space $((~{R}^n)_f, (\rho_n)_f, +_f,
(\lambda_n)_f)=(~{R}^n, (\rho_n)_f, +_f, \lambda_n)$  has
Lie group structure which differs from standard  Lie group structure
of $~{R}^n$  because group operations $''+''$ and $''+_f''$, as were showed in Example 2.8, are different.  Furthermore the Lebesgue measure $\lambda_n$ (in $R^n$) is Haar measure  on both Lie groups.

\end{rem}

\medskip

Now let consider $\ell_2=\{(x_k)_{k \in ~{N}}: x_k \in
~{R}~\&~ k \in ~{N}~\&~ \sum_{k \in
~{N}}x_k^2<\infty\}$ as a vector space with usual addition
operation $" + "$. If we equip $\ell_2$ with standard metric
$\rho_{\ell_2}$ defined by
$$
\rho_{\ell_2}((x_k)_{k \in ~{N}},(y_k)_{k \in
~{N}})=\sqrt{\sum_{k \in ~{N}}(x_k-y_k)^2}
$$
for $(x_k)_{k \in ~{N}},(y_k)_{k \in ~{N}} \in
\ell_2$, then $(\ell_2,'' + '', \rho_{\ell_2})$ stands an
example of a non-locally compact Polish group. Here naturally
arise a question asking whether there exists a metric $\rho$ in
$\ell_2$ such that $\big(\ell_2," + ", \rho\big)$ stands an
example of a locally compact $\sigma$-compact Polish group. An
affirmative answer to this question is containing in the following
example.

\begin{ex} Let consider $~{R}$ and $\ell_2$ as vector spaces
over the group of all rational numbers $~{Q}$. Let $(a_i)_{i
\in I}$ and $(b_i)_{i \in I}$ be Hamel bases in $~{R}$ and
$\ell_2$, respectively. For $x \in ~{R}\setminus \{0\}$,
there exists a unique sequence of non-zero rational numbers
$(q^{(x)}_{i_k})_{1 \le k \le n_x}$ such that
$x=\sum_{k=1}^{n_x}q^{(x)}_{i_k}a_{i_k}$. We set
$f(x)=\sum_{k=1}^{n_x}q^{(x)}_{i_k}b_{i_k}$ for $x \in
~{R}\setminus \{0\}$ and $f(0)=(0,0,\cdots)$. Notice that
$f:~{R}\to\ell_2$ is one-to-one linear transformation.

Let $x=\sum_{k=1}^{n_x}q^{(x)}_{i_k}b_{i_k}$ and
$y=\sum_{k=1}^{n_y}q^{(y)}_{i_k}b_{i_k}$. Now  if we set
$$
x +_f y = f(f^{-1}(x)\odot f^{-1}(y)),
$$
then we will obtain
$$
x +_f y = f(f^{-1}(x)+f^{-1}(y))=
f(\sum_{k=1}^{n_x}q^{(x)}_{i_k}a_{i_k}+\sum_{k=1}^{n_y}q^{(y)}_{i_k}a_{i_k})=$$
$$
\sum_{k=1}^{n_x}q^{(x)}_{i_k}b_{i_k}+\sum_{k=1}^{n_y}q^{(y)}_{i_k}b_{i_k}=
x+y,
$$
which means that a group operation $+_f$ coincides with usual
addition operation $" + "$.

Let define $\rho$ by
$$
\rho(x,y)=|f^{-1}(x)-f^{-1}(y)|=|\sum_{k=1}n_xq^{(x)}_{i_k}a_{i_k}
-  \sum_{k=1}n_yq^{(y)}_{i_k}a_{i_k}|.
$$

By Theorem 2.1 we know that $(~{R}_f, +_f, \rho_f)$,
equivalently $(\ell_2, + , \rho_f)$ is a locally compact non-compact Polish group which is
isomorphic to the Polish group $(~{R}, +, |\cdot|)$.

Moreover, if $(~{R}, +, |\cdot|, \lambda)$ is Haar measure
space, then $(\ell_2, + , \rho_f, \lambda_f)$ also is Haar measure
space. Denoting by $\mathcal{B}_{\rho_f}(\ell_2)$ a Borel
$\sigma$-algebra of subsets of $\ell_2$ generated by the metric
$\rho_f$, we define Haar measure $\lambda_{f}$ in $\ell_2$ by
$$
(\forall Y)(Y \in \mathcal{B}_{\rho_f}(\ell_2) \rightarrow
\lambda_{f}(Y)=\lambda(f^{-1}(Y))).
$$
\end{ex}

\begin{rem} Let $(G,\rho, +)$ be an abelian Polish group.  We say that $G$ is
one-dimensional group w.r.t. metric $\rho$ if for each $n \in
~{N}$ and for each family of different elements $(a_k)_{1 \le k \le
n}$ there is permutation $h$ of $\{1,2, \cdots, n\}$ such that
$$
\rho(a_{h(1)},a_{h(n)})=\sum_{k=1}^{n-1}\rho(a_{h(k)},a_{h(k+1)}).
$$
Then it is obvious to show that $(\ell_2, + , \rho_f, \lambda_f)$
is one-dimensional group w.r.t. metric $\rho_f$.
\end{rem}

\begin{ex} Let consider $~{R}^{\infty}$ and $~{R}$  as vector spaces
over the group of all rational numbers $~{Q}$. Let $(a_i)_{i
\in I}$ and $(b_i)_{i \in I}$ be  Hamel bases in $~{R}^{\infty}$ and
$~{R}$, respectively. For $x \in ~{R}^{\infty} \setminus \{(0,0, \cdots)\}$,
there exists a unique sequence of non-zero rational numbers
$(q^{(x)}_{i_k})_{1 \le k \le n_x}$ such that
$x=\sum_{k=1}^{n_x}q^{(x)}_{i_k}a_{i_k}$. We set
$f(x)=\sum_{k=1}^{n_x}q^{(x)}_{i_k}b_{i_k}$ for $x \in
~{R}^{\infty} \setminus \{(0,0, \cdots)\}$ and $f(0,0,\cdots)=0$. Notice that
$f:~{R}^{\infty} \to ~{R}$ is one-to-one linear transformation.

For $w,z \in ~{R}$, setting
$$
w +_f z = f(f^{-1}(w) + f^{-1}(z)),
$$
we get
$$
w +_f z = f(f^{-1}(w)+f^{-1}(z))=
f(\sum_{k=1}^{n_w}q^{(w)}_{i_k}a_{i_k}+\sum_{k=1}^{n_z}q^{(z)}_{i_k}a_{i_k})=$$
$$
\sum_{k=1}^{n_w}q^{(w)}_{i_k}b_{i_k}+\sum_{k=1}^{n_z}q^{(z)}_{i_k}b_{i_k}=
w+z,
$$
which means that a group operation $+_f$ coincides with usual
addition operation $'' + ''$  in $~{R}$.

Let define $\rho$ by
$$
\rho(w,z)=\rho_T(f^{-1}(w),f^{-1}(z)),$$
where $\rho_T$ is Tychonov  metric in  $~{R}^{\infty}$ defined by
$$
\rho_T((x_k)_{k \in N},(y_k)_{k \in N})=\sum_{k=1}^{\infty}\frac{|x_k-y_k|}{2^k(1+|x_k-y_k|)}
$$
for $(x_k)_{k \in N},(y_k)_{k \in N} \in ~{R}^{\infty}$.

By Theorem 2.1 we know that $(~{R}^{\infty}_f, +_f, \rho_f)$,
equivalently, $(~{R}, + , \rho_f)$ is an abelian  non-locally compact Polish group which is
isomorphic to the abelian  non-locally compact Polish group $(~{R}^{\infty}, +, \rho_T)$.

Let $\lambda$ be a translation invariant quasifinite borel measure in $~{R}^{\infty}$(see, for example, \cite{Bak91}, \cite{Bak04}).

We put
$$
(\forall Y)(Y \in \mathcal{B}_{\rho_f}(~{R}) \rightarrow
\lambda_{f}(Y)=\lambda(f^{-1}(Y))).
$$
Since   $\lambda$ is translation invariant quasifinite borel measure in $~{R}^{\infty}$, by virtue of Theorem 2.1 we deduce that so is
the measure $\lambda_{f}$ in $(~{R}, + , \rho_f)$.
\end{ex}

\section{Is an arbitrary diffused Borel probability measure in a Polish space Haar measure?}

The following lemma is a useful ingredient for our further investigations.
\begin{lem}
Let $E_1$ and $E_2$ be any two Polish topological spaces  without isolated points. Let
$\mu_1$ be a probability diffused Borel measure on $E_1$ and let
$\mu_2$ be a probability diffused Borel measure on $E_2$. Then
there exists a Borel isomorphism $\varphi : (E_1,B(E_1)) \to
(E_2,B(E_2))$ such that
$$\mu_1(X)=\mu_2(\varphi(X))$$ for every $ X \in B(E_1)$.
\end{lem}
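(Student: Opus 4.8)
The plan is to reduce the statement to a well‑known classification of standard Borel measure spaces. The key observation is that this Lemma is essentially a measure‑isomorphism theorem: any two atomless (diffused) Borel probability measures on standard Borel spaces are isomorphic mod nothing, and in fact one can realize the isomorphism as a genuine Borel isomorphism of the underlying spaces because a Polish space without isolated points is uncountable, hence a standard Borel space of continuum cardinality, whose Borel structure is unique up to Borel isomorphism.

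First I would record the relevant structural facts. A Polish space without isolated points is perfect, hence uncountable; by the Borel isomorphism theorem, any two uncountable Polish (more generally, standard Borel) spaces are Borel isomorphic, so $(E_1,B(E_1))$ and $(E_2,B(E_2))$ are each Borel isomorphic to $([0,1],B([0,1]))$. Transporting $\mu_1$ and $\mu_2$ along such isomorphisms, I reduce the problem to the case $E_1=E_2=[0,1]$ with two diffused Borel probability measures. Thus it suffices to prove the Lemma when both spaces are the unit interval.

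Next I would construct the isomorphism on $[0,1]$ via cumulative distribution functions. For a diffused Borel probability measure $\nu$ on $[0,1]$, the map $F_\nu(t)=\nu([0,t])$ is continuous (diffusedness kills the jumps) and nondecreasing with $F_\nu(0^-)=0$, $F_\nu(1)=1$. The idea is to push both $\mu_1$ and $\mu_2$ forward to Lebesgue measure on $[0,1]$ using their distribution functions, and then compose. The technical nuisance is that $F_\nu$ need not be strictly increasing (there may be intervals of $\nu$‑measure zero) and need not be surjective onto all of $[0,1]$, so it is not a bijection; one handles this by working with the generalized inverse $F_\nu^{-1}(s)=\inf\{t:F_\nu(t)\ge s\}$ and checking that $F_\nu$ pushes $\nu$ to Lebesgue measure while $F_\nu^{-1}$ pushes Lebesgue measure back to $\nu$, with the two composing to the identity off a set of measure zero. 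The composite $\varphi=F_{\mu_2}^{-1}\circ F_{\mu_1}$ then satisfies $\mu_1(X)=\mu_2(\varphi(X))$.

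The main obstacle, and the point I expect to require the most care, is that the naive distribution‑function map is a measure isomorphism only \emph{mod zero}: the sets where $F_\nu$ is locally constant and the atoms of Lebesgue measure's preimage create exceptional null sets on which $\varphi$ fails to be bijective. To upgrade a mod‑zero isomorphism to an everywhere‑defined Borel \emph{bijection} $\varphi$, I would invoke the standard bookkeeping argument: the two exceptional sets are Borel of equal (namely zero) measure and, being Borel subsets of uncountable standard Borel spaces, one adjusts $\varphi$ on them by grafting in an auxiliary Borel isomorphism between the leftover pieces, using the Borel isomorphism theorem again to match them up measure‑preservingly. Alternatively, and more cleanly, I would cite the classical result (isomorphism of Lebesgue–Rokhlin / standard probability spaces, cf. the measure‑isomorphism theorem) that any two atomless standard Borel probability spaces are isomorphic as measure spaces, and then absorb the null sets explicitly; the only real work is verifying that diffusedness plus the perfect‑Polish hypothesis guarantee the absence of atoms and the continuum cardinality needed to apply it.
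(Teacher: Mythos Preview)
The paper does not actually prove this lemma; it simply refers the reader to \cite{CicKha95}. Your sketch is the standard route to this classical fact and is essentially correct: reduce via the Borel isomorphism theorem to $[0,1]$, push each measure to Lebesgue via its (continuous, by diffusedness) distribution function, and then repair the failure of bijectivity on a Borel null set.

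The one place that deserves a little more care than you give it is the patching step. The two residual Borel null sets need not have the same Borel-isomorphism type a priori (one could be countable while the other is uncountable), so you cannot simply invoke the Borel isomorphism theorem between them; your phrase ``match them up measure-preservingly'' is vacuous here since both sets have measure zero, and it is the \emph{Borel} matching that needs justification. The usual remedy is either to first discard a suitable countable set on each side so that both leftovers become uncountable standard Borel spaces, or---as you yourself suggest in the alternative---to cite the measure-isomorphism theorem for atomless standard (Lebesgue--Rokhlin) probability spaces directly, which already packages this bookkeeping. Either way your plan goes through, and it is presumably close to what appears in \cite{CicKha95}.
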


The proof of Lemma 3.1 can be found in \cite{CicKha95}.

The solution of the Problem 1.2 is contained in the following statement.

\begin{thm} Let $(G,\rho)$ be a Polish metric space which is dense-in-itself. Let $\mu$ be a diffused Borel probability measure defined in $(G,\rho)$. Then there exist a metric $\rho_{\varphi}$ and a group operation $\odot_{\varphi}$ in $G$ such that the following three conditions

(i) The class of Borel measurable subsets of $G$ generated by the metric $\rho_{\varphi}$ coincides with the class of Borel
measurable subsets of the same space generated by the metric $\rho$,

(ii) $(G,\rho_{\varphi}, \odot_{\varphi})$  is  a compact Polish group

and

(iii) $\mu$ is a left (right or two-sided) invariant  probability Haar  measure in $(G,\rho_{\varphi}, \odot_{\varphi})$

hold true.

\end{thm}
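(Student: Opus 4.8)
The plan is to realize the theorem as a single transport of a fixed model group: Lemma 3.1 is used to match the measures and Theorem 2.1 to carry across every structural property. First I would fix a concrete compact abelian Polish group to serve as a model. Take $H=[0,1)$ with addition modulo $1$ and the natural arc-length metric $\rho_H$, equipped with its Haar measure $\nu$, i.e. Lebesgue measure. Then $(H,\rho_H)$ is a compact Polish space that is dense-in-itself, $(H,\oplus,\rho_H)$ is a compact abelian Polish group, and $\nu$ is a two-sided invariant diffused Borel probability measure on it (diffused because $H$ is an infinite compact metrizable group, so its Haar measure is non-atomic).

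With the model in hand I would apply Lemma 3.1 to the pair $(E_1,\mu_1)=(G,\mu)$ and $(E_2,\mu_2)=(H,\nu)$. Both are Polish spaces without isolated points carrying diffused Borel probability measures, so the lemma produces a Borel isomorphism $\varphi:(G,\mathcal{B}_{\rho}(G))\to(H,\mathcal{B}_{\rho_H}(H))$ with $\mu(X)=\nu(\varphi(X))$ for every $X\in\mathcal{B}_{\rho}(G)$. Setting $f:=\varphi^{-1}:H\to G$ (a bijection of $H$ onto the continuum-sized set $G$), I invoke Theorem 2.1 taking $H$ as the source group $(G,\odot,\rho)$ of that theorem and our space $G$ as its target set $X$. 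This produces the required metric and operation by transport,
$$
\rho_\varphi(x,y)=\rho_H(\varphi(x),\varphi(y)),\qquad x\odot_\varphi y=\varphi^{-1}\bigl(\varphi(x)\oplus\varphi(y)\bigr),
$$
for $x,y\in G$.

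It then remains only to read off the three conclusions from Theorem 2.1. Compactness of $(G,\rho_\varphi,\odot_\varphi)$ is item (v), which is (ii). For (iii), item (viii) asserts that the transported measure $\nu_f$, given by $\nu_f(Y)=\nu(f^{-1}(Y))=\nu(\varphi(Y))$, is a Haar measure, and it is two-sided invariant because $H$ is abelian; but $\nu(\varphi(Y))=\mu(Y)$ by the choice of $\varphi$, so $\nu_f=\mu$ and $\mu$ is itself this Haar measure. For (i), every $\rho_\varphi$-ball satisfies $\{y:\rho_\varphi(x,y)<r\}=\varphi^{-1}(\{z:\rho_H(\varphi(x),z)<r\})$, whence $\mathcal{B}_{\rho_\varphi}(G)=\varphi^{-1}(\mathcal{B}_{\rho_H}(H))$; and since $\varphi$ is a Borel isomorphism of the original $\sigma$-algebras, the right-hand side equals $\mathcal{B}_{\rho}(G)$.

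The construction is essentially mechanical once Lemma 3.1 and Theorem 2.1 are in place, so I do not anticipate a deep obstacle. The two points that genuinely need care are, first, confirming that the chosen model group really carries a diffused Haar probability measure and has no isolated points, so that Lemma 3.1 applies in its stated form; and second, the identification $\mathcal{B}_{\rho_\varphi}(G)=\mathcal{B}_{\rho}(G)$ in (i), which hinges on $\varphi$ being a genuine Borel isomorphism of the original Borel structures rather than a mere measure-preserving map --- exactly the extra strength that Lemma 3.1 provides. I would also stress that the transported topology of $\rho_\varphi$ need not agree with the original $\rho$-topology; only the Borel $\sigma$-algebras coincide, which is all that (i) requires.
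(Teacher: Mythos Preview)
Your proof is correct and follows essentially the same approach as the paper's own argument: both pick a compact Polish group with Haar measure (the paper leaves the model group $(G_2,\rho_2,\odot_2)$ abstract while you specialize to the circle), invoke Lemma~3.1 to obtain a measure-preserving Borel isomorphism, and then transport the group structure via Theorem~2.1. Your verification of (i) via the identification $\mathcal{B}_{\rho_\varphi}(G)=\varphi^{-1}(\mathcal{B}_{\rho_H}(H))=\mathcal{B}_{\rho}(G)$ is in fact slightly cleaner than the paper's two-inclusion argument.
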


\begin{proof} Let $(G_2,\rho_2,\odot_2)$ be a compact Polish group which is dense-in-itself equipped with two-sided invariant Haar
measure $\lambda_2$. By Lemma 3.1, there
exists a Borel isomorphism $\varphi : (G,B(G)) \to (G_2,B(G_2))$
such that
$$\mu(X)=\lambda_2(\varphi(X))$$ for every $ X \in B(G)$.

We set
$$x \odot_{\varphi} y=\varphi^{-1}( \varphi(x)\odot_2
\varphi(y))
$$
and
$$
\rho_{\varphi}(x,y)=\rho_2(\varphi(x),\varphi(y))
$$
for $x,y \in G$.

By Theorem 2.1 we know that $(G,\odot_{\varphi}, \rho_{\varphi})$  is a
compact Polish group without isolated points which is Borel isomorphic to the compact Polish group $(G_2,
\odot_2, \rho_2)$ and a measure $\lambda_{\varphi}$,
defined by
$$
(\forall Y)(Y \in \mathcal{B}(G_2) \rightarrow
\lambda_{\varphi}(Y)=\lambda(\varphi^{-1}(Y))),
$$
is a two-sided invariant Haar measure in $G$.

Since $\varphi : (G,B(G)) \to (G_2,B(G_2))$ is Borel isomorphism,
we deduce that
$$
\{ z : \rho_{\varphi}(x,z)<r \}=\{ z :
\rho_2(\varphi(x),\varphi(z))<r \}=\varphi^{-1}(\{w :
\rho_2(\varphi(x),w)<r  \}) \in B(G_2).
$$
for each $x \in G$ and $r>0$.

Since $\mathcal{B}(G)$ is $\sigma$-algebra, we deduce that
$\mathcal{B}_{\rho_{\varphi}}(G) \subseteq \mathcal{B}(G)$.

We have to show that $\mathcal{B}(G) \subseteq
\mathcal{B}_{\rho_{\varphi}}(G) $. Assume the contrary and let $X \in
\mathcal{B}(G) \setminus \mathcal{B}_{\rho_{\varphi}}(G)$. Since $\varphi
: (G,\mathcal{B}(G)) \to (G_2,\mathcal{B}(G_2))$ is Borel isomorphism, we deduce
$\varphi(X) \in \mathcal{B}(G_2)$. Then, by Theorem 2.1 we deduce  that  $X \in
B_{\rho_{\varphi}}(G)$ and we get the contradiction.

\end{proof}

\begin{rem} In the proof of Theorem 3.1, if under $(G_2,\rho_2,\odot_2)$ we take an abelian compact Polish group without isolated points and with a two-sided invariant Haar
measure $\lambda$ then the group  $(G,\rho_{\varphi}, \odot_{\varphi})$  will be  a compact abelian Polish group without isolated points. Similarly, if under $(G_2,\rho_2,\odot_2)$ we take a non-abelian compact Polish group without isolated points and with a two-sided invariant Haar
measure $\lambda$ then the  group  $(G,\rho_{\varphi}, \odot_{\varphi})$  also will be  a non-abelian compact Polish group without isolated points.

\end{rem}

The solution of Problem 1.3 is contained in the  following statement.

\begin{thm} Let $(G,\rho)$ be a Polish metric space which is dense-in-itself. Let $\mu$ be a diffused $\sigma$-finite non-finite Borel measure defined in $(G,\rho)$. Then there exist a metric $\rho_{\varphi}$, a group operation $\odot_{\varphi}$ in $G$ and the Borel measure $\mu^\star$ in $G$ such that the following  conditions

(i) The class of Borel measurable subsets of $G$ generated by the metric $\rho_{\varphi}$ coincides with the class of Borel
measurable subsets of the same space generated by the metric $\rho$,

(ii) $(G,\rho_{\varphi}, \odot_{\varphi})$  is  a non-compact locally compact Polish group,

(iii) The measures $\mu^\star$ and $\mu$ are equivalent,

and

(iv) $\mu^\star$ is a left (right or two-sided) invariant  $\sigma$-finite non-finite  Haar  measure in $(G,\rho_{\varphi}, \odot_{\varphi})$

hold true.

\end{thm}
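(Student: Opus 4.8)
The plan is to run the proof of Theorem 3.1 almost verbatim, replacing the compact model group by the real line and absorbing the gap between ``probability'' and ``$\sigma$-finite non-finite'' into the equivalence clause (iii). As the model I would take $(G_2,\rho_2,\odot_2)=(\mathbf{R},|\cdot|,+)$, a dense-in-itself non-compact locally compact Polish group, carrying the linear Lebesgue measure $\lambda_2$, which is a two-sided (translation) invariant, diffused, $\sigma$-finite non-finite Haar measure. The task is then to find a Borel isomorphism $\varphi$ between $G$ and $\mathbf{R}$ that transports this structure, and a measure $\mu^\star$ equivalent to $\mu$.

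The obstacle, and the only genuinely new point compared with Theorem 3.1, is that Lemma 3.1 matches only \emph{probability} measures, whereas $\mu$ is infinite. I would bypass this by passing to equivalent probability measures on both sides. Since $\mu$ is $\sigma$-finite, write $G=\bigsqcup_n A_n$ with $\mu(A_n)<\infty$ and put $h=\sum_n 2^{-n}(1+\mu(A_n))^{-1}\mathbf 1_{A_n}$, a strictly positive bounded Borel function; then $\nu(X)=c\int_X h\,d\mu$, with $c$ a normalizing constant, is a probability measure with $\nu\sim\mu$. Because $\mu$ is diffused and equivalence preserves null sets, $\nu$ vanishes on singletons, so $\nu$ is diffused. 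On $\mathbf{R}$ I would choose any diffused probability measure $\nu_2\sim\lambda_2$, for instance the one with density $\tfrac{1}{\pi(1+t^2)}$. Applying Lemma 3.1 to the dense-in-itself Polish spaces $(G,\nu)$ and $(\mathbf{R},\nu_2)$ yields a Borel isomorphism $\varphi:(G,B(G))\to(\mathbf{R},B(\mathbf{R}))$ with $\nu(X)=\nu_2(\varphi(X))$ for all $X\in B(G)$.

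Next I would transport the structure exactly as in Theorem 3.1. Using Theorem 2.1 with $f=\varphi^{-1}$, set
$$
x\odot_{\varphi}y=\varphi^{-1}(\varphi(x)+\varphi(y)),\qquad
\rho_{\varphi}(x,y)=|\varphi(x)-\varphi(y)|,
$$
and define $\mu^\star(Y)=\lambda_2(\varphi(Y))$ for $Y\in\mathcal B_{\rho_\varphi}(G)$. By items (vi) and (viii) of Theorem 2.1, $(G,\rho_\varphi,\odot_\varphi)$ is a locally compact Polish group and $\mu^\star$ is a two-sided invariant Haar measure; it is $\sigma$-finite because $\lambda_2$ is, and non-finite since $\mu^\star(G)=\lambda_2(\mathbf R)=\infty$, whence also $G$ is non-compact. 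Condition (i), the coincidence $\mathcal B_{\rho_\varphi}(G)=\mathcal B_\rho(G)$, follows word for word from the argument in Theorem 3.1, using only that $\varphi$ is a Borel isomorphism.

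It remains to verify the equivalence (iii), which is where the probability reduction pays off. A chase through null sets gives, for every $Y\in B(G)$,
$$
\mu^\star(Y)=0 \iff \lambda_2(\varphi(Y))=0 \iff \nu_2(\varphi(Y))=0 \iff \nu(Y)=0 \iff \mu(Y)=0,
$$
where the middle equivalences use $\nu_2\sim\lambda_2$ and the Lemma~3.1 identity $\nu(Y)=\nu_2(\varphi(Y))$, and the last uses $\nu\sim\mu$. Hence $\mu^\star$ and $\mu$ have identical null sets, so they are equivalent, completing the argument. I expect the main difficulty to be purely bookkeeping: making sure the two equivalences $\nu\sim\mu$ and $\nu_2\sim\lambda_2$ are threaded correctly so that the exact Haar measure $\mu^\star$ (the pullback of Lebesgue measure) is merely equivalent to, rather than equal to, the given $\mu$.
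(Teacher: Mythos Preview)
Your proof is correct and follows essentially the same route as the paper: both pass to equivalent diffused probability measures on $G$ and on the model group $\mathbf{R}$, invoke Lemma~3.1 to obtain the Borel isomorphism $\varphi$, transport the group structure via Theorem~2.1, and take $\mu^\star$ to be the pullback $\lambda_2\circ\varphi$ of Lebesgue measure. The only cosmetic differences are that the paper builds the equivalent probability measures from explicit partitions (rather than your densities) and writes $\mu^\star$ as a weighted sum that, after its long invariance computation, collapses to exactly your $\lambda_2(\varphi(\cdot))$; your presentation is the more direct one.
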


\begin{proof} Let $(G_2,\rho_2,\odot_2)$ be a non-compact locally compact Polish group which is dense-in-itself with two-sided invariant  $\sigma$-finite non-finite  Haar  measure  $\lambda_2$ (for example, the real axis  $\mathbf{R}$  with Lebesgue measure ).
Let $(X^{(2)}_k)_{k \in N}$ be a partition of the $G_2$ into Borel measurable subsets such that $0< \lambda_2(X^{(2)}_k)<+\infty$ for $k \in N$.
We set $$\mu_2(X)=\sum_{k \in N}\frac{\lambda_2(X \cap X^{(2)}_k)}{2^k\lambda_2(X^{(2)}_k)}$$ for $X \in \mathcal{B}(G_2)$.

Similarly, let $(Y_k)_{k \in N}$ be a partition of the $G$ into Borel measurable subsets such that $0< \mu(Y_k)<+\infty$ for $k \in N$.
We set $$\mu_1(Y)=\sum_{k \in N}\frac{\mu(Y \cap Y_k)}{2^k\mu(Y_k)}$$ for $Y \in \mathcal{B}(G)$.

By Lemma 3.1, there
exists a Borel isomorphism $\varphi : (G,B(G)) \to (G_2,B(G_2))$
such that
$$\mu_1(Y)=\mu_2(\varphi(Y))$$ for every $ Y \in B(G)$.

We set
$$x \odot_{\varphi} y=\varphi^{-1}( \varphi(x)\odot_2
\varphi(y))
$$
and
$$
\rho_{\varphi}(x,y)=\rho_2(\varphi(x),\varphi(y))
$$
for $x,y \in G$.

By Theorem 2.1 we know that $(G,\odot_{\varphi}, \rho_{\varphi})$  is a locally
compact non-compact Polish group without isolated points which is Borel isomorphic to the non-compact locally compact Polish group $(G_2,
\odot_2, \rho_2)$.

Now we put

$$
\mu^\star(X)=\sum_{k \in N}2^k\lambda_2(X^{(2)}_k) \mu_1(X \cap \varphi^{-1}(X^{(2)}_k))$$
for $X \in \mathcal{B}(G)$.

By using Theorem 2.1 and the coincidence of Borel $\sigma$-algebras $\mathcal{B}(G)$ and $\mathcal{B}_{\rho_{\varphi}}(G)$,  we have to show only that the measure $\mu^\star$ is a two-sided invariant measure in $G$.
Indeed, for $h_1,h_2 \in G$ and $X \in \mathcal{B}(G)$, we have
$$
\mu^\star(h_1 \odot_{\varphi} X  \odot_{\varphi} h_2 )=\sum_{k \in N}2^k\lambda_2(X^{(2)}_k) \mu_1((h_1 \odot_{\varphi} X  \odot_{\varphi} h_2) \cap \varphi^{-1}(X^{(2)}_k))=
$$

$$
\sum_{k \in N}2^k\lambda_2(X^{(2)}_k) \mu_2(\varphi[(h_1 \odot_{\varphi} X  \odot_{\varphi} h_2) \cap \varphi^{-1}(X^{(2)}_k])=
$$

$$
\sum_{k \in N}2^k\lambda_2(X^{(2)}_k) \sum_{i \in N}\frac{\lambda_2(\varphi[(h_1 \odot_{\varphi} X  \odot_{\varphi} h_2) \cap \varphi^{-1}(X^{(2)}_k)] \cap X^{(2)}_i)}{2^i\lambda_2(X^{(2)}_i)}=
$$

$$
\sum_{k \in N}2^k\lambda_2(X^{(2)}_k) \sum_{i \in N}\frac{\lambda_2(\varphi[\varphi^{-1}\{ \varphi h_1 \odot \varphi(X)  \odot  \varphi(h_2)\} \cap \varphi^{-1}(X^{(2)}_k)] \cap X^{(2)}_i)}{2^i\lambda_2(X^{(2)}_i)}=
$$

$$
\sum_{k \in N}2^k\lambda_2(X^{(2)}_k) \sum_{i \in N}\frac{\lambda_2(( (\varphi h_1 \odot \varphi(X)  \odot  \varphi(h_2)) \cap X^{(2)}_k) \cap X^{(2)}_i)}{2^i\lambda_2(X^{(2)}_i)}=
$$

$$
\sum_{k \in N} \lambda_2(( \varphi h_1 \odot \varphi(X)  \odot  \varphi(h_2)) \cap X^{(2)}_k)=
$$

$$
\lambda_2(\varphi h_1 \odot \varphi(X)  \odot  \varphi(h_2))=\lambda_2( \varphi(X))=
\sum_{k \in N}2^k\lambda_2(X^{(2)}_k) \mu_2(\varphi(X) \cap X^{(2)}_k)=
$$
$$
\sum_{k \in N}2^k\lambda_2(X^{(2)}_k) \mu_1(\varphi^{-1}[\varphi(X) \cap X^{(2)}_k])=
$$
$$
\sum_{k \in N}2^k\lambda_2(X^{(2)}_k) \mu_1(X\cap \varphi^{-1}(X^{(2)}_k))=\mu^\star( X ).
$$
\end{proof}

\begin{rem} The result of Theorem 3.2 remains true if $\mu$ is a diffused Borel probability measure in $(G,\rho)$.
\end{rem}

As a simple consequence of Theorem 3.2, we have the following corollary.

\begin{cor} Let $(G,\rho)$ be a Polish metric space which is dense-in-itself. Let $\mu$ be a diffused $\sigma$-finite non-finite Borel measure defined in $(G,\rho)$. Then there exist a metric $\rho_{\varphi}$  and  a group operation $\odot_{\varphi}$ in $G$ such that  the following three  conditions

(i) The class of Borel measurable subsets of $G$ generated by the metric $\rho_{\varphi}$ coincides with the class of Borel
measurable subsets of the same space generated by the metric $\rho$,

(ii) $(G,\rho_{\varphi}, \odot_{\varphi})$  is  a non-compact locally compact Polish group

and

(iii) The measure $\mu$ is a two-sided quasi-invariant \footnote{A Borel measure $\mu$ defined in a Polish group $(G, \odot, \rho)$ is called two-sided quasi-invariant measure in $G$ if for each Borel subset $X$ we have  $\mu(X)>0$ if and only  $\mu(h_1 \odot X \odot h_2)>0$ for each pair of  elements $h_1,h_2 \in G$.}  Borel probability  measure in $(G,\rho_{\varphi}, \odot_{\varphi})$

hold true.

\end{cor}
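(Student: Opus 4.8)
The plan is to read everything off Theorem~3.2 and to exploit the fact that two-sided quasi-invariance is a property of the \emph{measure class} alone, not of any particular representative. First I would apply Theorem~3.2 to the given $\mu$, obtaining a metric $\rho_{\varphi}$, a group operation $\odot_{\varphi}$, and a Borel measure $\mu^\star$ such that $\mathcal{B}_{\rho_{\varphi}}(G)=\mathcal{B}_{\rho}(G)$, such that $(G,\rho_{\varphi},\odot_{\varphi})$ is a non-compact locally compact Polish group, such that $\mu^\star$ and $\mu$ are equivalent, and such that $\mu^\star$ is a two-sided invariant $\sigma$-finite non-finite Haar measure. Conditions (i) and (ii) of the corollary are then literally conclusions (i) and (ii) of Theorem~3.2, so no further work is required for them, and the entire content of the corollary is concentrated in condition (iii).

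For (iii) I would argue in two short steps. The first step is that an invariant measure is automatically quasi-invariant: since $\mu^\star(h_1\odot_{\varphi}X\odot_{\varphi}h_2)=\mu^\star(X)$ for every Borel $X$ and all $h_1,h_2\in G$, in particular $\mu^\star(X)>0$ holds if and only if $\mu^\star(h_1\odot_{\varphi}X\odot_{\varphi}h_2)>0$. The second step is to transfer this to $\mu$ through the equivalence $\mu\sim\mu^\star$ furnished by part (iii) of Theorem~3.2: equivalent measures share the same null sets, so $\mu(Y)>0$ if and only if $\mu^\star(Y)>0$ for every Borel $Y$. Chaining these equivalences, first with $Y=X$ and then with $Y=h_1\odot_{\varphi}X\odot_{\varphi}h_2$, yields $\mu(X)>0 \Leftrightarrow \mu(h_1\odot_{\varphi}X\odot_{\varphi}h_2)>0$, which is exactly two-sided quasi-invariance of $\mu$ in $(G,\rho_{\varphi},\odot_{\varphi})$. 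To honour the word ``probability'' in (iii), rather than invoking the non-finite $\mu$ itself I would use the normalized probability measure $\mu_1$ already built inside the proof of Theorem~3.2 via $\mu_1(Y)=\sum_{k}\mu(Y\cap Y_k)/(2^k\mu(Y_k))$; all weights are strictly positive, so $\mu_1\sim\mu\sim\mu^\star$, and the same two-step argument with $\mu_1$ in place of $\mu$ exhibits $\mu_1$ as a diffused Borel \emph{probability} measure that is two-sided quasi-invariant.

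I do not expect a genuine obstacle, since the statement is advertised as a simple consequence of Theorem~3.2; the only point needing care is conceptual rather than computational, namely the observation that quasi-invariance is insensitive to replacing a measure by an equivalent one. Once this is noted, the strong invariance of $\mu^\star$ can be weakened to quasi-invariance and then carried across the whole equivalence class, in particular to the probability normalization $\mu_1$, with no additional estimates.
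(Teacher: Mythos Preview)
Your approach is exactly what the paper intends: it offers no proof at all, only the sentence ``As a simple consequence of Theorem~3.2, we have the following corollary,'' so your derivation---read off (i) and (ii) verbatim, then transfer quasi-invariance from the Haar measure $\mu^\star$ to $\mu$ via the equivalence $\mu\sim\mu^\star$---is precisely the intended argument. You also correctly flagged the inconsistency in the statement (the hypothesis makes $\mu$ non-finite while (iii) calls it a probability measure) and resolved it sensibly by passing to the equivalent probability normalization $\mu_1$ from the proof of Theorem~3.2; the paper does not comment on this tension.
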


Finally, we state the following problem

{\bf Problem 3.1}
 Let $(G,\rho)$ be a Polish metric space which is dense-in-itself, that is, $G$ is a space homeomorphic to a separable complete metric
space and  $G$ has no isolated points. Let $\mu$ be a diffused non-finite $\sigma$-finite Borel measure defined in $(G,\rho)$. Do there exist a metric $\rho_1$ and a
group operation $\odot$ in $G$ such that the following three conditions

(j) The class of Borel measurable subsets of $G$ generated by the metric $\rho_1$ coincides with the class of Borel
measurable subsets of the same space generated by the metric $\rho$,

(jj) $(G,\rho_1, \odot)$ is a non-compact locally compact  Polish group

and

(jjj) $\mu$ is a left(right or two-sided) invariant non-finite $\sigma$-finite Haar  measure in $(G,\rho_1, \odot)$

hold true ?

{\bf Acknowledgement}. The main results of this manuscript were reported on Swedish-Georgian Conference in Analysis~ $\&$ ~Dynamical Systems which was held   at National Academy of Georgian Republic in Tbilisi, Georgia, 15-22 July, 2015.


\begin{thebibliography}{9}

\bibitem{Bak91} Baker R., ``Lebesgue measure" on $R^{\infty}$. {\it Proc. Amer.
Math. Soc.}, {\bf 113(4)} (1991), 1023--1029.

\bibitem{Bak04} Baker R., ``Lebesgue measure" on $R^{\infty}$. II.
{\it Proc. Amer. Math. Soc.}, {\bf 132(9)} (2004), 2577--2591
(electronic).


\bibitem{CicKha95} Cichon J., Kharazishvili A.,
Weglorz B., {\it Subsets of the real line}, Wydawnictwo
Uniwersytetu Lodzkiego, Lodz (1995).


\bibitem{Maleki2012}  Maleki A.,  An applications of
ultrafilters to the Haar measure, {\it  African Diaspora Journal
of Mathematics,} \textbf{14} (1)(2012), 54--64.


\bibitem{Yakovenko04}  Yakovenko V.,  Derivation of the Lorentz Transformation, Lecture note for course
Phys171H, {\it Introductory Physics: Mechanics and Relativity},
Department of Physics, University of Maryland, College Park, 15
November, (2004), 1--5.


\bibitem{Haar50} Halmos P.R., {\it Measure theory,} Princeton, Van Nostrand
(1950).


\bibitem{Neumann99}von Neumann J., Invariant measures,{\it Amer. Math. Soc.,} Providence, RI, 1999.


\bibitem{Haar 1933} Haar A., (1933), ''Der Massbegriff in der Theorie der kontinuierlichen Gruppen'', {\it Annals of Mathematics,}
2 {\bf 34 (1)},  147–169.

\end{thebibliography}
\end{document}